\theoremstyle{plain} 
\newtheorem{theorem}    {Theorem}
\newtheorem{lemma}      [theorem]{Lemma}
\newtheorem{corollary}  [theorem]{Corollary}
\newtheorem{proposition}[theorem]{Proposition}
\theoremstyle{definition}
\theoremstyle{remark}
\newtheorem{remark}              {Remark}
\newcommand{\Q}{\mathbb Q}
\newcommand{\Z}{\mathbb Z}
\newcommand{\C}{\mathbb C}
\newcommand{\floor}[1]{\lfloor #1 \rfloor}
\newcommand{\Mod}[1]{\ (\text{mod}\ #1)}
\newcommand{\SL}{\operatorname{SL}}
\begin{document}

\title{The Trace of $T_2$ takes no repeated values}

\author{Liubomir Chiriac \and Andrei Jorza}


\address{Portland State University, Fariborz
  Maseeh Department of Mathematics and Statistics, Portland, OR
  97201}
\email{chiriac@pdx.edu}
\address{University of Notre Dame, 275 Hurley Hall, Notre Dame, IN 46556}\email{ajorza@nd.edu}

\subjclass[2010]{Primary: 11F30, Secondary: 11B37, 11F85}

\begin{abstract}

We prove that the trace of the Hecke operator $T_2$ acting on the vector space of cusp forms of level one takes no repeated values, except for 0, which only occurs when the space is trivial. 

\end{abstract}

\keywords {Hecke operators of modular forms; Trace formula; Linear forms in logarithms}

\maketitle

\section{Introduction}

Given two normalized cuspidal eigenforms of same level $N$ but different weights, according to a result of Ghitza \cite{Ghitza}, there exists an index $n\leq 4(\log N+1)^2$ where the corresponding Fourier coefficients of the two forms differ. A consequence of this fact is that a newform of level one is determined by the first four Fourier coefficients. Numerical experiments suggest that the threshold can further be improved to $n=2$. In other words, to distinguish two newforms of level one it is enough to compare only their second Fourier coefficients. Since newforms are normalized so that their first coefficients are one, $n=2$ is clearly the optimal bound in this case. 
 
In fact, Vilardi and Xue \cite{Vilardi-Xue} have shown that the
above statement holds under the assumption of Maeda's conjecture
for the Hecke operator $T_2$. A key observation in
\cite{Vilardi-Xue} is that the trace of $T_2$ does not take the
same value at two distinct weights that are close to each
other. It has been conjectured that this phenomenon happens in general, for any two distinct weights (see \cite[Conjecture 3.7]{Xue-Zhu} and \cite[Remark 3.6]{Vilardi-Xue}). 

The purpose of this article is to prove the aforementioned conjecture. Throughout the paper, we denote by ${\rm Tr ~T_2}(2k,\SL_2(\Z))$ the trace of the Hecke operator $T_2$ acting on the space $\mathcal{S}_{2k}(\SL_2(\Z))$ of cusp forms of weight $2k$ and level one. Recall that $\dim_{\C} \mathcal{S}_{2k}(\SL_2(\Z))=0$ precisely when $k<6$ or $k=7$. In particular, the trace of $T_2$ is trivially zero in those cases. 

\begin{theorem} There are no repeated values in the sequence $\{ {\rm Tr ~T_2}(2k,\SL_2(\Z)) \}_{k\geq 2}$, except for 0, which only occurs trivially when $k<6$ or $k=7$.
\end{theorem}

Our strategy consists of three main steps. First, using the Eichler--Selberg trace formula, we show that the trace is given by a recurrence relation of degree two. Second, we employ $p$-adic methods to reduce the range of possible coincidences for the trace of $T_2$, using an explicit formula for its 2-adic valuation, which was originally established by the authors in \cite{chiriac-jorza:newton} in a more general context. Finally, we apply effective results from the theory of linear forms in two logarithms to prove that the recurrence has no repeated values. 

\section{The recurrence satisfied by the trace of \texorpdfstring{$T_2$}{T\_2}}
It is convenient to express the trace of $T_2$ in terms of a simple sequence which satisfies a linear recurrence of order two.

Consider the sequence $\{a_n\}_{n\geq 0}$ starting with $a_0=1$, $a_1=-1$, and for all integers $n\geq 2$:
\begin{equation} \label{def_a_n}
a_n=-3a_{n-1}-4 a_{n-2}.
\end{equation}
In this section we express ${\rm Tr ~T_2}(2k,\SL_2(\Z))$ in terms of the sequence $\{a_n\}$.  

\begin{proposition} \label{a(n)}
Let  $k\geq 2$ be an integer. Then 
	\begin{equation} \label{trace_T2}
		{\rm Tr ~T_2}(2k,\SL_2(\Z)) =
			\begin{cases}
				-a_{k-1}-1, \text{ if }k\equiv 2,3\Mod 4 \\
				-a_{k-1}-1+2^{k-1}, \text { if } k\equiv 0\Mod 4 \\
				-a_{k-1}-1-2^{k-1}, \text { if } k\equiv 1\Mod 4.
			\end{cases}
	\end{equation}
\end{proposition}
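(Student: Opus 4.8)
The plan is to read off \eqref{trace_T2} from the Eichler--Selberg trace formula and then to recognize its ``elliptic'' part as a shift of the sequence $\{a_n\}$. For an even weight $w\ge 4$ the trace formula at level one reads
\[
{\rm Tr ~T_2}(w,\SL_2(\Z)) \;=\; -\frac12\sum_{t\in\Z,\ t^2\le 8} P_w(t,2)\,H(8-t^2)\;-\;\frac12\sum_{dd'=2}\min(d,d')^{w-1},
\]
where $H(\cdot)$ is the Hurwitz class number and, for $\rho,\bar\rho$ the roots of $X^2-tX+2$, one sets $P_w(t,2)=\dfrac{\rho^{w-1}-\bar\rho^{w-1}}{\rho-\bar\rho}$. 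Since $2$ is not a perfect square there is no boundary contribution, and the divisor sum is $2\min(1,2)^{w-1}=2$, so the second summand equals $-1$. First I would isolate the finitely many surviving elliptic terms, $t\in\{0,\pm1,\pm2\}$; using the parity relation $P_w(-t,2)=(-1)^wP_w(t,2)=P_w(t,2)$ (valid since $w=2k$ is even), together with $H(7)=H(8)=1$ and $H(4)=\tfrac12$, the elliptic sum collapses to $P_{2k}(0,2)+2\,P_{2k}(1,2)+P_{2k}(2,2)$, so that
\[
{\rm Tr ~T_2}(2k,\SL_2(\Z)) \;=\; -P_{2k}(1,2)\;-\;\tfrac12\bigl(P_{2k}(0,2)+P_{2k}(2,2)\bigr)\;-\;1 .
\]

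Next I would evaluate the two remaining terms explicitly. For $t=0$ the roots are $\pm i\sqrt2$, so $P_{2k}(0,2)=(i\sqrt2)^{2k-2}=(-2)^{k-1}$. For $t=2$ the roots are $1\pm i=\sqrt2\,e^{\pm i\pi/4}$, so $P_{2k}(2,2)=2^{(2k-1)/2}\sin\!\bigl((2k-1)\pi/4\bigr)=\varepsilon_k\,2^{k-1}$, where $\varepsilon_k=+1$ if $k\equiv1,2\pmod4$ and $\varepsilon_k=-1$ if $k\equiv0,3\pmod4$. Hence $P_{2k}(0,2)+P_{2k}(2,2)=\bigl((-1)^{k-1}+\varepsilon_k\bigr)2^{k-1}$, which is $0$ for $k\equiv2,3\pmod4$, equals $-2^{k}$ for $k\equiv0\pmod4$, and equals $+2^{k}$ for $k\equiv1\pmod4$. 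Plugging these three values into the previous display gives exactly the three cases of \eqref{trace_T2}, provided we also know that $P_{2k}(1,2)=a_{k-1}$.

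It remains to establish that identity. Let $\rho,\bar\rho$ be the roots of $X^2-X+2$, so $\rho+\bar\rho=1$ and $\rho\bar\rho=2$; then $\rho^2+\bar\rho^{2}=(\rho+\bar\rho)^2-2\rho\bar\rho=-3$ and $(\rho\bar\rho)^2=4$, so $\rho^2$ and $\bar\rho^{2}$ are the roots of $X^2+3X+4$, which is precisely the characteristic polynomial of the recurrence \eqref{def_a_n}. Therefore the sequence $b_n:=\dfrac{\rho^{2n+1}-\bar\rho^{2n+1}}{\rho-\bar\rho}=P_{2n+2}(1,2)$, being a fixed linear combination of $(\rho^2)^n$ and $(\bar\rho^{2})^n$, satisfies $b_{n+2}=-3b_{n+1}-4b_n$; and since $b_0=1=a_0$ and $b_1=(\rho+\bar\rho)^2-\rho\bar\rho=-1=a_1$, we get $b_n=a_n$ for all $n\ge 0$, hence $P_{2k}(1,2)=b_{k-1}=a_{k-1}$.

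The part I expect to be most delicate is the normalization-sensitive bookkeeping in the first paragraph: fixing the precise shape of the Eichler--Selberg formula (the value $H(4)=\tfrac12$, the overall factor $-\tfrac12$, and the exact divisor term), and --- more substantively --- the evaluations of $P_{2k}(0,2)$ and $P_{2k}(2,2)$, whose dependence on $k\bmod 4$ is exactly what makes \eqref{trace_T2} split into three cases. Everything afterward is a routine recurrence computation, which I would cross-check against small weights (the trace vanishes in weights $4$ and $6$, equals $-24$ in weight $12$, $216$ in weight $16$, and $-528$ in weight $18$).
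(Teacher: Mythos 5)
Your proof is correct, and it reaches \eqref{trace_T2} by a genuinely different route than the paper for the substantive part of the argument. Both proofs start identically: specialize the Eichler--Selberg formula at $n=2$, use $P_{2k}(-t,2)=P_{2k}(t,2)$, the values $H(8)=H(7)=1$, $H(4)=\tfrac12$, and the divisor term $-1$, to get ${\rm Tr}\,T_2 = -P_{2k}(1,2)-\tfrac12\bigl(P_{2k}(0,2)+P_{2k}(2,2)\bigr)-1$. The difference is in how the three polynomial values are evaluated. The paper expands $P_{2k}(t,2)$ as the binomial sum $\sum_{j}(-1)^j\binom{2k-2-j}{j}2^j t^{2k-2-2j}$ and then proves two generating-function identities (its Lemma on the sequence $\{a_n\}$ and on $\sum_j(-1)^j\binom{2n-j}{j}2^{n-j}=(-1)^{\lfloor n/2\rfloor}$) to identify the resulting sums with $a_{k-1}$ and the sign pattern $2^{k-2}$-terms. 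You instead use the Binet-type closed form $P_{2k}(t,2)=\dfrac{\rho^{2k-1}-\bar\rho^{2k-1}}{\rho-\bar\rho}$ (equivalent to the paper's coefficient-of-$x^{2k-2}$ definition by the standard partial-fraction/geometric-series expansion, a step you use implicitly but which is routine), evaluate $t=0$ and $t=2$ directly from the explicit roots $\pm i\sqrt2$ and $1\pm i$ to get $(-2)^{k-1}$ and $\varepsilon_k 2^{k-1}$ (your sign table checks out, as does the case split mod $4$), and identify $P_{2k}(1,2)$ with $a_{k-1}$ by noting $\rho^2,\bar\rho^2$ are the roots of $X^2+3X+4$, the characteristic polynomial of \eqref{def_a_n}, and matching initial values. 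Your route avoids the combinatorial identities entirely and makes the mod-$4$ trichotomy transparent as an eighth-root-of-unity phenomenon; the paper's route keeps everything in elementary generating-function manipulations but needs the auxiliary Lemma with its two identities. Incidentally, your closed form for $a_n$ is exactly the expression $a_m=\frac{1}{\omega-\overline{\omega}}(\omega^{2m+1}-\overline{\omega}^{2m+1})$ that the paper only introduces later, in the proof of Proposition~\ref{compare_a_n}, so your argument also makes that later step immediate.
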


Proposition~\ref{a(n)} is an application of the Eichler--Selberg trace formula. Before stating it, we first introduce some notations.

The \textit{Hurwitz class number} $H(n)$ is the number of equivalence classes with respect to $\SL_2(\Z)$ of positive definite binary quadratic forms of discriminant $-n$. We count the class containing $x^2+y^2$ with multiplicity $1/2$, and the class containing $x^2+xy+y^2$ with multiplicity $1/3$. By convention $H(0)=-1/12$ and $H(n)=0$ if $n<0$. Moreover, we note that $H(n)=0$ whenever $n\equiv 1$ or $2\Mod 4$.

For convenience, we record below the first few nonzero values of $H(n)$:
{\renewcommand{\arraystretch}{2}
\begin{center}
\begin{tabular}{ c | c c c c c c c c c c c c c c c c c}
$n$ & 0 & 3 & 4 & 7 & 8 & 11 & 12 & 15 & 16 & 19 & 20 & 23 & 24 & 27 & 28 & 31 & 32\\  \hline
$H(n)$ & $-\frac{1}{12}$ & $\frac{1}{3}$ & $\frac{1}{2}$ & 1 & 1 & 1 & $\frac{4}{3}$ & 2 & $\frac{3}{2}$ & 1 & 2 & 3 & 2 & $\frac{4}{3}$ & 2 & 3 & 3

\end{tabular}
\end{center} }

Let $k\geq 2$ be an integer. Another object that appears in the formula is the polynomial $P_{2k}(t,n)$, defined as the coefficient of $x^{2k-2}$ in the power series expansion of $(1-tx+nx^2)^{-1}$. 

With the above notation, the Eichler--Selberg trace formula (as given in the Appendix by Zagier to \cite{Zag}) reads: 

$${\rm Tr ~T}_n (2k,\SL_2(\Z)) =-\frac{1}{2}\sum_{|t|\leq 2\sqrt{n}} P_{2k}(t,n) H(4n-t^2)-\frac{1}{2}\sum_{dd'=n} \min(d,d')^{2k-1}.$$

Specializing at $n=2$ we prove the following combinatorial formula.  

\begin{lemma} \label{comb}
Let $k\geq 2$ be an integer. Then
$${\rm Tr ~T_2}(2k,\SL_2(\Z))=(-2)^{k-2}-1-\sum_{j=0}^{k-1}(-1)^j \binom{2k-2-j}{j}2^j(1+2^{2k-3-2j}).$$
\end{lemma}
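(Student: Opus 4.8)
The plan is to evaluate the Eichler--Selberg trace formula at $n=2$ directly and then repackage the resulting finite sums into the stated closed form. First I would observe that the constraint $|t|\le 2\sqrt 2$ forces $t\in\{0,\pm1,\pm2\}$, so the geometric sum has only five terms. The relevant Hurwitz class numbers are $H(8-0)=H(8)=1$, $H(8-1)=H(7)=1$, and $H(8-4)=H(4)=\tfrac12$, all read off from the table. For the divisor term, the factorizations $dd'=2$ are $(1,2)$ and $(2,1)$, each contributing $\min(d,d')^{2k-1}=1$, so $-\tfrac12\sum_{dd'=2}\min(d,d')^{2k-1}=-1$; this already accounts for the $-1$ in the statement.

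Next I would exploit the symmetry $P_{2k}(-t,n)=P_{2k}(t,n)$, which holds because replacing $x$ by $-x$ turns $(1-tx+nx^2)^{-1}$ into $(1+tx+nx^2)^{-1}$ while leaving the coefficient of the even power $x^{2k-2}$ unchanged. Pairing $t=\pm1$ with each other and $t=\pm2$ with each other then gives
\[
{\rm Tr~T_2}(2k,\SL_2(\Z))=-\tfrac12\Bigl(P_{2k}(0,2)+2P_{2k}(1,2)+P_{2k}(2,2)\Bigr)-1 .
\]

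The third ingredient is an explicit binomial expansion of $P_{2k}(t,n)$. Writing the generating function as $(1-tx+nx^2)^{-1}=\sum_{j\ge0}(tx-nx^2)^j=\sum_{j\ge0}x^j(t-nx)^j$ and collecting the coefficient of $x^{2k-2}$ yields
\[
P_{2k}(t,n)=\sum_{i=0}^{k-1}\binom{2k-2-i}{i}(-n)^i\,t^{2k-2-2i}.
\]
Specializing $n=2$: at $t=0$ only the $i=k-1$ term survives, so $P_{2k}(0,2)=(-2)^{k-1}$, whence $-\tfrac12 P_{2k}(0,2)=(-2)^{k-2}$, the leading term of the claimed formula. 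At $t=1$ this gives $P_{2k}(1,2)=\sum_{i=0}^{k-1}(-1)^i\binom{2k-2-i}{i}2^i$, and at $t=2$, after absorbing the factor $2^{2k-2-2i}$, it gives $P_{2k}(2,2)=\sum_{i=0}^{k-1}(-1)^i\binom{2k-2-i}{i}2^{2k-2-i}$. Adding $-P_{2k}(1,2)-\tfrac12 P_{2k}(2,2)$ and factoring $2^i$ out of the two resulting sums merges them into $-\sum_{i=0}^{k-1}(-1)^i\binom{2k-2-i}{i}2^i\bigl(1+2^{2k-3-2i}\bigr)$, which is precisely the sum in the statement.

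The computation is elementary throughout; the only step demanding genuine care is the last one, namely correctly tracking the powers of $2$ when the three polynomials $P_{2k}(0,2)$, $P_{2k}(1,2)$, $P_{2k}(2,2)$ are combined, and in particular recognizing the identity $2^i+2^{2k-3-i}=2^i(1+2^{2k-3-2i})$ that fuses the $t=1$ and $t=2$ contributions into a single sum with no case distinction.
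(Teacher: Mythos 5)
Your proof is correct and follows essentially the same route as the paper: evaluate the Eichler--Selberg formula at $n=2$ with $t\in\{0,\pm1,\pm2\}$, use $H(8)=H(7)=1$, $H(4)=\tfrac12$ and the divisor term $-1$, expand $P_{2k}(t,n)=\sum_{i=0}^{k-1}(-1)^i\binom{2k-2-i}{i}n^i t^{2k-2-2i}$, and combine the $t=1$ and $t=2$ contributions into the single sum. Your write-up even makes explicit the final bookkeeping of powers of $2$ that the paper leaves to the reader; no gaps.
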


\begin{proof}
Given a power series $A(x)$, we denote by $[x^n]A(x)$ the coefficient corresponding to $x^n$. This allows us to write $P_{2k}(t,n)$ as follows,
\begin{align*}
P_{2k}(t,n)&=[x^{2k-2}]\frac{1}{1-tx+nx^2} \\
&=[x^{2k-2}] \sum_{m\geq 0} \left( x(t-nx) \right)^m \\
&=\sum_{m= k-1}^{2k-2} [x^{2k-2-m}] \sum_{j=0}^m \binom{m}{j}(-nx)^j t^{m-j} \\
&=\sum_{m= k-1}^{2k-2} \binom{m}{2k-2-m} (-n)^{2k-2-m} t^{2m-(2k-2)} .
\end{align*}

Setting $j=2k-2-m$ we obtain 
$$P_{2k}(t,n)=\sum_{j=0}^{k-1} (-1)^j \binom{2k-2-j}{j} n^j t^{2k-2-2j}.$$

In particular, $P_{2k}(t,n)=P_{2k}(-t,n)$. For $n=2$ the trace formula yields 
\begin{align*}
{\rm Tr ~T_2}(2k,\SL_2(\Z))&=-\frac{1}{2} P_{2k}(0,2)H(8)-P_{2k}(1,2)H(7)-P_{2k}(2,2)H(4)-1 \\
&=-\frac{1}{2}P_{2k}(0,2)-P_{2k}(1,2)-\frac{1}{2}P_{2k}(2,2)-1. 
\end{align*} 
Using the above explicit description of $P_{2k}(t,n)$ we obtain the desired expression for the trace of $T_2$. 
\end{proof}

\begin{lemma} \label{parts}
	\begin{enumerate}[(i)]
		\item The terms of the sequence $\{a_n\}_{n\geq 0}$ defined in \eqref{def_a_n} satisfy $$a_n=\sum_{j=0}^n (-1)^j \binom{2n-j}{j} 2^j.$$
		\item For all integers $n\geq 0$ we have $$\sum_{j=0}^n (-1)^j \binom{2n-j}{j} 2^{n-j}=(-1)^{\floor{n/2}}.$$
	\end{enumerate}
\end{lemma}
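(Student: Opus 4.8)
The plan is to recognize both sums as coefficients of rational power series of exactly the kind that appeared in the proof of Lemma~\ref{comb}, and then to read off the desired coefficient by passing to the even part. Concretely, recall from that proof the polynomial identity
\[
[x^{2k-2}]\,\frac{1}{1-tx+mx^2} \;=\; \sum_{j=0}^{k-1}(-1)^j\binom{2k-2-j}{j}\,m^j\,t^{\,2k-2-2j},
\]
valid for all $k\geq 1$ and all values of $t,m$ (the binomial $\binom{2k-2-j}{j}$ vanishes for $j>k-1$, so the upper limit is harmless). I would set $k-1=n$ and specialize $t,m$ so that the summand matches each of the two sums in turn.

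For part~(i) I would take $t=1$, $m=2$, so that $\sum_{j=0}^{n}(-1)^j\binom{2n-j}{j}2^j=[x^{2n}]\tfrac{1}{1-x+2x^2}$. Rationalizing the denominator via $(1-x+2x^2)(1+x+2x^2)=(1+2x^2)^2-x^2=1+3x^2+4x^4$ gives $\tfrac{1}{1-x+2x^2}=\tfrac{1+x+2x^2}{1+3x^2+4x^4}$; since the denominator is even, the even part of this series is $\tfrac{1+2x^2}{1+3x^2+4x^4}$, which after the substitution $y=x^2$ becomes $\tfrac{1+2y}{1+3y+4y^2}$. On the other hand, the recurrence \eqref{def_a_n} with $a_0=1$, $a_1=-1$ shows $\sum_{n\geq 0}a_ny^n=\tfrac{1+2y}{1+3y+4y^2}$. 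Comparing the coefficient of $x^{2n}$ with that of $y^n$ yields the claimed formula for $a_n$.

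For part~(ii) I would instead take $t=\sqrt2$, $m=1$ (so that $t^{2n-2j}m^j=2^{n-j}$), giving $\sum_{j=0}^{n}(-1)^j\binom{2n-j}{j}2^{n-j}=[x^{2n}]\tfrac{1}{1-\sqrt2\,x+x^2}$. Now $(1-\sqrt2\,x+x^2)(1+\sqrt2\,x+x^2)=(1+x^2)^2-2x^2=1+x^4$, so the series equals $\tfrac{1+\sqrt2\,x+x^2}{1+x^4}$, whose even part is $\tfrac{1+x^2}{1+x^4}$; setting $y=x^2$ this is $\tfrac{1+y}{1+y^2}=(1+y)\sum_{i\geq 0}(-1)^iy^{2i}$, whose coefficient of $y^n$ is $(-1)^i$ when $n=2i$ and also $(-1)^i$ when $n=2i+1$, i.e.\ $(-1)^{\floor{n/2}}$ in both cases. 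This is the value of the sum.

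I do not expect a genuine obstacle here: the argument is essentially bookkeeping. The only points to check carefully are that the index ranges of the two sums genuinely match the one in the $P_{2k}$-formula (in particular $2n-j\geq 0$ throughout and the $j>n$ terms vanish), and that ``take the even part, then substitute $y=x^2$'' is a legitimate operation at the level of formal power series (it is: it just picks out the even-degree coefficients). If one prefers to avoid generating functions entirely, each identity can instead be proved by induction on $n$ after checking $n=0,1$: the right-hand side of~(i) satisfies $a_n=-3a_{n-1}-4a_{n-2}$ and the right-hand side of~(ii), say $b_n$, satisfies $b_n=-b_{n-2}$, each following from the stated sum via one application of Pascal's rule and a reindexing — this trades the even-part bookkeeping for a somewhat longer binomial manipulation but is equally routine.
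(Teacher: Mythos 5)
Your argument is correct, and it reaches the same target as the paper—identifying the sum-side generating function with the recurrence's generating function $\frac{1+2x}{1+3x+4x^2}$ (and $\frac{1+y}{1+y^2}$ for part (ii))—but by a genuinely different computation. The paper forms the double sum $\sum_{n\geq 0}\sum_{j=0}^n(-1)^j\binom{2n-j}{j}2^jx^n$, reindexes with $m=n-j$, and evaluates it directly with the negative binomial series, so the variable stays $x$ throughout; part (ii) is then dispatched as ``a similar computation.'' You instead reuse the coefficient identity for $\frac{1}{1-tx+mx^2}$ already established in the proof of Lemma~\ref{comb}, specialize $(t,m)=(1,2)$ and $(t,m)=(\sqrt2,1)$, rationalize the denominator, and pass to the even part before substituting $y=x^2$. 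This buys economy (no second binomial-series manipulation, and it makes explicit that both sums are just values $P_{2n+2}(t,m)$, which is exactly how they arise in Lemma~\ref{comb}), at the cost of two small justifications you correctly flag: the specialization $t=\sqrt2$ is legitimate because the identity is polynomial in $t$ and $m$ (or simply holds over $\R$), and even-part extraction is a harmless formal operation. Your checks of the rationalizations $(1-x+2x^2)(1+x+2x^2)=1+3x^2+4x^4$ and $(1-\sqrt2x+x^2)(1+\sqrt2x+x^2)=1+x^4$ and of the coefficient $(-1)^{\floor{n/2}}$ of $\frac{1+y}{1+y^2}$ are all right. The inductive route you mention at the end is only a sketch (verifying that the right-hand sides satisfy $a_n=-3a_{n-1}-4a_{n-2}$ and $b_n=-b_{n-2}$ takes a bit more than a single use of Pascal's rule), but it is not needed since your main argument is complete.
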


\begin{proof} (i) The generating function defined by the sequence on the right-hand side is 
$$\sum_{n\geq 0} \sum_{j=0}^n (-1)^j \binom{2n-j}{j} 2^j x^n.$$
With the change of variables $m=n-j$ we get 
\begin{align*}
\sum_{m\geq 0} \sum_{j \geq 0} (-1)^j \binom{2m+j}{j} (2x)^j x^m
&=\sum_{m\geq 0} \frac{x^m}{(1+2x)^{2m+1}} \\
&=\frac{1}{1+2x}\sum_{m\geq 0} \left( \frac{x}{(1+2x)^2} \right)^m \\
&=\frac{1}{1+2x} \left( 1- \frac{x}{(1+2x)^2} \right) ^{-1} \\
&=\frac{1+2x}{1+3x+4x^2},
\end{align*} where in the first and third equalities we have used the negative binomial series (for integers $-d<0$) $$(1+x)^{-d}=\sum_{j\geq 0} (-1)^j \binom{d+j-1}{j}x^j.$$ At the same time, the generating function $A(x)=\sum_{n\geq 0}a_nx^n$ of the sequence $\{a_n\}_{n\geq 0}$ can be found from the identity 
$$\sum_{n\geq 2} \left( a_n+3a_{n-1}+4a_{n-2} \right) x^n=0,$$ which is equivalent to
$$(A(x)-1+x)+3x(A(x)-1)+4x^2A(x)=0.$$ This shows that $$A(x)=\frac{1+2x}{1+3x+4x^2},$$ so the two sequences that appear in part (i) define the same generating function. 

\medskip

(ii) A similar computation shows that 
$$\sum_{n\geq 0} \sum_{j=0}^n (-1)^j \binom{2n-j}{j} 2^{n-j} x^n=\frac{1+x}{1+x^2}=\sum_{n\geq 0} (-1)^{\floor{n/2}} x^n.$$
\end{proof}

We are now ready to prove Proposition~\ref{a(n)}.

\begin{proof}[Proof of Proposition~\ref{a(n)}] 

By combining Lemma~\ref{comb} and Lemma~\ref{parts} (for $n=k-1$) we see that
\begin{align*}
{\rm Tr ~T_2}(2k,\SL_2(\Z))&=(-2)^{k-2}-1-\sum_{j=0}^{k-1}(-1)^j \binom{2k-2-j}{j}2^j(1+2^{2k-3-2j}) \\
&=(-2)^{k-2}-1- a_{k-1} - (-1)^{\floor{(k-1)/2}}2^{k-2} \\
&=-a_{k-1}-1+2^{k-2}\left( (-1)^k+(-1)^{\floor{(k+1)/2}} \right),
\end{align*}
and the conclusion follows by considering $k$ modulo 4. 
\end{proof}

\section{The 2-adic valuation of the trace of \texorpdfstring{$T_2$}{T\_2}}

In this section we give an explicit formula for the 2-adic valuation $v_2$ of the trace of $T_2$. A more general formula, concerning the trace of $\wedge^n T_2$ for $n\leq 15$, appears in \cite[Theorem 12]{chiriac-jorza:newton}. 

\begin{proposition} \label{2-adic_valuation_Tr_T_2} For integers $k\gg 0$ we have
$$v_2 \left( {\rm Tr ~T_2}(2k,\SL_2(\Z)) \right)=3+v_2\left(k-\Omega \right),$$ where 
$$\Omega=\frac{1}{2} \left( \frac{\log_2(1-2\omega)}{\log_2(1-\omega)}+1 \right)\in \Z_2.$$ Here the function $\log_2:1+2\Z_2\to 2\Z_2$ is the $2$-adic logarithm, and $\omega=\frac{1-\sqrt{-7}}{2}$ is embedded in $\mathbb{Q}_2$ so that $v_2(\omega)=1$. 
\end{proposition}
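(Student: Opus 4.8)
The plan is to express the trace of $T_2$ in a form where its $2$-adic valuation is governed by a single $2$-adic analytic function, and then differentiate the $2$-adic valuations of the two pieces that make up the trace.

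First I would use Proposition~\ref{a(n)} to write ${\rm Tr~T_2}(2k,\SL_2(\Z))$ in all residue classes mod $4$ as $-a_{k-1}-1\pm 2^{k-1}$ or $-a_{k-1}-1$. The key point is that $\{a_n\}$ has characteristic polynomial $x^2+3x+4$, whose roots are $-\omega$ and $-\bar\omega$ with $\omega=\frac{1-\sqrt{-7}}{2}$, $\bar\omega = \frac{1+\sqrt{-7}}{2}$, $\omega\bar\omega = 2$ and $\omega+\bar\omega = 1$. Solving the initial conditions $a_0=1$, $a_1=-1$ gives a Binet formula $a_n = \alpha(-\omega)^n + \beta(-\bar\omega)^n$ for explicit constants $\alpha,\beta\in\Q(\sqrt{-7})\subset\Q_2$; in fact one finds $\alpha\bar\omega + \beta\omega$-type relations forcing $a_n = \frac{(1-2\omega)(-\bar\omega)^n + (1-2\bar\omega)(-\omega)^n}{1-2\omega - (1 - 2\bar\omega)}$, up to getting the constants right. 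Then $-a_{k-1}-1 = -\bigl(\alpha(-\omega)^{k-1}+\beta(-\bar\omega)^{k-1}\bigr) - 1$. Since $v_2(\omega)=1$ and $v_2(\bar\omega)=0$ (as $\omega\bar\omega = 2$, one of them is a $2$-adic unit; the normalization fixes $v_2(\omega)=1$), the term $(-\omega)^{k-1}$ has valuation $k-1\to\infty$, so for $k$ large the dominant behavior comes from the $\bar\omega$-part together with the $-1$ and, when present, the $\pm 2^{k-1}$ term (which also has valuation $k-1\to\infty$). Hence for $k\gg 0$,
\begin{equation*}
{\rm Tr~T_2}(2k,\SL_2(\Z)) \equiv -\beta(-\bar\omega)^{k-1} - 1 \pmod{2^{k-2}},
\end{equation*}
and the valuation is that of $-\beta(-\bar\omega)^{k-1}-1$ provided this has valuation bounded below $k-2$, which I would verify.

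Next I would compute $v_2\bigl(\beta(-\bar\omega)^{k-1}+1\bigr)$ using the $2$-adic logarithm. Write $-\bar\omega = u$, a $2$-adic unit, and identify $\beta$; one checks $\beta$ is a unit and in fact $-\beta u^{k-1}$ lies in $1+2\Z_2$ for all relevant $k$ (tracking the residue classes mod $4$ carefully, the sign issues from $(-1)^k$ cancel against those baked into $\beta$). Then $v_2(-\beta u^{k-1}+1) = v_2\bigl(\log_2(-\beta u^{k-1})\bigr) + (\text{something})$; more precisely, for $x\in 1+2\Z_2$ with $x\neq 1$ one has $v_2(x-1) = v_2(\log_2 x)$ once $v_2(x-1)\ge 2$, with a correction at valuation exactly $1$. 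Since $\log_2(-\beta u^{k-1}) = \log_2(-\beta) + (k-1)\log_2 u$, this is an affine function of $k$ with coefficients in $2\Z_2$. Writing $\log_2 u = \log_2(1-\omega)$ after matching $u = -\bar\omega$ with $1-\omega$ (using $\omega + \bar\omega = 1$, so $1-\omega = \bar\omega$; the sign $-\bar\omega$ versus $\bar\omega$ contributes a factor $-1$ which does not change the log since $\log_2(-1)=0$ as $-1\notin 1+2\Z_2$ — here I'd need to be careful and perhaps pass to $u^2 \in 1+4\Z_2$), one gets $v_2(\log_2(-\beta u^{k-1})) = v_2\bigl((k-1)\log_2(1-\omega) + \log_2(-\beta)\bigr) = v_2(\log_2(1-\omega)) + v_2\bigl(k-1 + \log_2(-\beta)/\log_2(1-\omega)\bigr)$. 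Identifying $v_2(\log_2(1-\omega))$ as an explicit integer (I expect it contributes the constant $3$, or part of it, together with a $\log_2 2$ term from the series for $\log_2$) and simplifying $\log_2(-\beta)/\log_2(1-\omega)$ into the stated shift so that $k-1+(\cdots) = k - \Omega$ with $\Omega = \frac{1}{2}\bigl(\frac{\log_2(1-2\omega)}{\log_2(1-\omega)}+1\bigr)$ — this last identification is where the precise value of $\beta$ in terms of $1-2\omega$ enters — yields the formula. Throughout I'd invoke \cite{chiriac-jorza:newton} for the general computation, since this is the $n=1$ specialization of Theorem 12 there.

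The main obstacle will be the bookkeeping of $2$-adic units and signs: getting the Binet constants $\alpha,\beta$ exactly right, checking that $-\beta u^{k-1}$ really lands in $1+2\Z_2$ (or passing to an index-$2$ subprogression and handling the two cases), and correctly accounting for the correction term in the identity $v_2(x-1)$ versus $v_2(\log_2 x)$ when the valuation is small — this is exactly what forces the additive constant to be $3$ rather than something else. A secondary technical point is justifying the ``$k\gg 0$'': one must ensure the discarded terms $(-\omega)^{k-1}$ and $\pm 2^{k-1}$, of valuation $k-1$, genuinely exceed $v_2({\rm Tr~T_2})$, which holds because $3 + v_2(k-\Omega)$ grows only like $\log_2 k$ while $k-1$ grows linearly, so the inequality is automatic once $k$ is large enough; making ``large enough'' effective (if needed downstream) would require a lower bound on $v_2(\log_2(1-\omega))$ and an $a$ $priori$ bound on $v_2(k-\Omega)$, but for the qualitative statement as written this is immediate.
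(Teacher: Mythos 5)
Your overall route is the same as the paper's: write ${\rm Tr~T_2}(2k,\SL_2(\Z))=-a_{k-1}-1+\varepsilon 2^{k-1}$, expand $a_{k-1}$ by its Binet formula, discard the pieces of valuation $k-1$ and $2k-1$, and compute the valuation of the remaining unit part via the $2$-adic logarithm (the paper does this in Lemma~\ref{l:v2} by writing $1-2\omega=-(1-\omega)^{2\Omega-1}$ and applying the Lifting the Exponent lemma, which is the cleaner form of your $\log_2$ manipulation, with the constant $3$ coming from $v_2\bigl((1-\omega)^2-1\bigr)$). One small slip to fix along the way: the characteristic roots of $x^2+3x+4$ are $\omega^2$ and $\overline{\omega}^2$, not $-\omega$ and $-\overline{\omega}$ (their product must be $4$, not $2$); the correct closed form is $a_n=\frac{\omega^{2n+1}-\overline{\omega}^{2n+1}}{\omega-\overline{\omega}}$, which is what produces the decomposition \eqref{trT2} and, ultimately, the precise $\Omega$. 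Also, $-1$ does lie in $1+2\Z_2$; $\log_2(-1)=0$ holds because $-1$ is torsion, not for the reason you give.

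The genuine gap is in your last step. You assert that the dominance condition (that the discarded terms of valuation $k-1$ exceed $3+v_2(k-\Omega)$) is ``automatic'' because ``$v_2(k-\Omega)$ grows only like $\log_2 k$.'' No such bound follows from anything established up to that point: for a general $\Omega\in\Z_2$ it is simply false that $v_2(k-\Omega)=O(\log k)$, and indeed if $\Omega$ were of Liouville type $2$-adically (say $\Omega=\sum 2^{n_i}$ with $n_{i+1}>2^{n_i}$) there would be infinitely many integers $k$ with $v_2(k-\Omega)>k$, and the dominant-term argument would fail for those $k$. What is actually needed is $v_2(k-\Omega)<k-4$ for all $k\gg 0$, which is a nontrivial Diophantine statement about this particular $\Omega$ (a ratio of $2$-adic logarithms of algebraic numbers); the paper does not treat it as immediate but cites \cite[Proposition 7]{chiriac-jorza:newton} for it, and a self-contained proof would require a $p$-adic linear-forms-in-logarithms type input. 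This ineffectivity is precisely why the paper proves Lemma~\ref{l:v2} for general shifts $t$ and routes the main theorem through valuations of ${\rm Tr~T_2}-t$ rather than through Proposition~\ref{2-adic_valuation_Tr_T_2} itself. If you instead lean entirely on \cite[Theorem 12]{chiriac-jorza:newton}, as you suggest at one point, the statement follows, but then the proposal is a citation rather than a proof; as an independent argument, the ``automatic/immediate'' claim is the step that does not stand.
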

As stated, Proposition \ref{2-adic_valuation_Tr_T_2} is insufficient for our purposes.
By Proposition \ref{a(n)}, $${\rm Tr ~T_2} (2k,\SL_2(\mathbb{Z}))=-a_{k-1}-1+\varepsilon 2^{k-1},$$ where
$\varepsilon=0,\pm 1$. As the sequence $\{ a_n \}$ is always odd,
the trace of $T_2$ is always even, but its valuation is hard to
pin down except when $k\gg 0$, because one cannot control the
2-adic expansion of $\Omega$. The strategy of \cite[Theorem
12]{chiriac-jorza:newton} is to write explicitly
\begin{equation}\label{trT2}
{\rm Tr ~T_2} (2k,\SL_2(\mathbb{Z}))=\left(\frac{\overline{\omega}^{2k-1}}{\omega-\overline{\omega
  }}-1\right)-\frac{\omega^{2k-1}}{\omega-\overline{\omega
  }}+\varepsilon 2^{k-1},
\end{equation}
where $\overline{\omega}=\frac{1+\sqrt{-7}}{2}$ is the conjugate
of $\omega$, and to compute each valuation separately. As $k\gg
0$ the valuation of the first term dominates. To account for this ineffective condition we need to prove a slightly more general
result.
\begin{lemma}\label{l:v2}Suppose $t$ is a multiple of 8. Then
\[v_2\left(\frac{\overline{\omega}^{2k-1}}{\omega-\overline{\omega
  }}-1-t\right)=3+v_2(k-\Omega_t),\]
where 
$$\Omega_t=\frac{1}{2} \left(\frac{\log_2((1+t)(2\omega-1))}{\log_2(1-\omega)}+1 \right)\in \Z_2.$$

\end{lemma}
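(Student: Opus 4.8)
The plan is to reduce the statement to a $2$-adic analytic computation about the exponential function $k \mapsto \overline{\omega}^{2k-1}$. First I would observe that $\overline{\omega}$ is a unit in $\Z_2$ (since $v_2(\overline{\omega}) = 0$, as $\omega\overline{\omega} = 2$ and $v_2(\omega)=1$), and more precisely $\overline{\omega} \equiv 1 \pmod{2}$ but we need to know $\overline{\omega}$ modulo higher powers of $2$ to identify the branch. In fact $\overline{\omega}^2 = 3\overline{\omega} - 2$, and one checks $\overline{\omega}^2 \in 1 + 2\Z_2$; the key structural fact is that $\overline{\omega}^{2k-1}$, as a function of $k$, can be written as $\overline{\omega}^{-1}\cdot(\overline{\omega}^2)^k$, and since $\overline{\omega}^2 \in 1 + 4\Z_2$ (this needs a quick verification — compute $\overline{\omega}^2 = 3\overline{\omega}-2$ and use $\overline{\omega} \in 1+\Z_2$ with the right $2$-adic digits, or equivalently note $\overline{\omega} = 1-\omega$ so $\overline{\omega}^2 = 1-2\omega+\omega^2 = 1 - 2\omega + (\omega-1)\cdots$; cleanly, $\overline{\omega}=1-\omega$ gives $\overline{\omega}\in 1-2\Z_2$), the map $k \mapsto (\overline{\omega}^2)^k$ extends to a $2$-adic analytic function $\Z_2 \to 1+4\Z_2$ with $\log_2\bigl((\overline{\omega}^2)^k\bigr) = k\log_2(\overline{\omega}^2) = k\log_2(1-\omega)^2 = 2k\log_2(1-\omega)$.

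Next I would set $x = \frac{\overline{\omega}^{2k-1}}{\omega-\overline{\omega}} - 1 - t$ and compute $1+t+x = \dfrac{\overline{\omega}^{2k-1}}{\omega - \overline{\omega}}$. Since $\omega - \overline{\omega} = \sqrt{-7}$ has valuation $0$, and $\overline{\omega}^{2k-1}$ has valuation $0$, the whole expression is a $2$-adic unit; I want to show it lies in $1 + 8\Z_2$ so that I can take its $2$-adic logarithm and so that $v_2(x) \geq 3$. This is where the hypothesis that $t$ is a multiple of $8$ and the value at a single reference point enter: one identifies the unique $k = k_0$ (a priori in $\Z_2$) at which the expression equals $1$, namely $\overline{\omega}^{2k_0 - 1} = (\omega-\overline{\omega})(1)$, wait — rather, one solves $\dfrac{\overline{\omega}^{2k-1}}{\omega-\overline{\omega}} = 1+t$, i.e. $(\overline{\omega}^2)^k = \overline{\omega}(1+t)(\omega - \overline{\omega}) = \overline{\omega}(1+t)\sqrt{-7}$. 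Taking $\log_2$ of both sides (legitimate once one checks the right-hand side is in $1+4\Z_2$, which is exactly the content of the $t\equiv 0 \pmod 8$ assumption together with $\overline{\omega}\sqrt{-7} \in 1+4\Z_2$), this gives $2k\log_2(1-\omega) = \log_2\bigl((1+t)(2\omega-1)\bigr)$ — here I use $\overline{\omega}\sqrt{-7} = \overline{\omega}(\omega - \overline{\omega}) = \omega\overline{\omega} - \overline{\omega}^2 = 2 - \overline{\omega}^2$ and then massage to match the stated $2\omega - 1$ up to squares, so that the solution is $k = \Omega_t$ as defined. Then $\dfrac{\overline{\omega}^{2k-1}}{\omega-\overline{\omega}} = (\overline{\omega}^2)^{k-\Omega_t}\cdot(1+t) \cdot (\text{correction})$, and expanding via $\log$/$\exp$, $\dfrac{\overline{\omega}^{2k-1}}{\omega-\overline{\omega}} - 1 - t = (1+t)\bigl((\overline{\omega}^2)^{k-\Omega_t} - 1\bigr) = (1+t)\bigl(\exp_2(2(k-\Omega_t)\log_2(1-\omega)) - 1\bigr)$.

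Finally I would extract the valuation. Since $v_2(1+t) = 0$, $v_2(\log_2(1-\omega)) = v_2(-\omega - \tfrac{\omega^2}{2} - \cdots)$; as $v_2(\omega) = 1$, the leading term $-\omega$ has valuation $1$ and $\tfrac{\omega^n}{n}$ has valuation $n - v_2(n) \geq 2$ for $n \geq 2$, so $v_2(\log_2(1-\omega)) = 1$. Hence $v_2\bigl(2(k-\Omega_t)\log_2(1-\omega)\bigr) = 2 + v_2(k-\Omega_t) \geq 2 > v_2(2)=1$... wait, I need this $\geq 2$ to ensure the argument is in the domain $2\Z_2$ of $\exp_2$ where $\exp_2$ is an isometry-up-to-scaling; actually on $4\Z_2$ the exponential satisfies $v_2(\exp_2(y) - 1) = v_2(y)$. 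Since $v_2(2(k-\Omega_t)\log_2(1-\omega)) = 2 + v_2(k - \Omega_t) \geq 2$, I can invoke $v_2(\exp_2(y)-1) = v_2(y)$ for $y \in 4\Z_2$ (standard: for $v_2(y) \geq 2$, the term $y$ dominates $y^n/n!$) to get $v_2\bigl((\overline{\omega}^2)^{k-\Omega_t} - 1\bigr) = 2 + v_2(k-\Omega_t)$. But the claim is $3 + v_2(k - \Omega_t)$, so I must be slightly more careful: $\log_2(1-\omega) = \log_2(\overline{\omega})$ and I should double-check whether its valuation is $1$ or whether a cancellation bumps it to $2$ — in fact $-\omega$ has valuation exactly $1$, and all higher terms $\omega^n/n$ have valuation $\geq 2$, so $v_2(\log_2\overline{\omega}) = 1$ and $v_2(\overline{\omega}^{2(k-\Omega_t)} - 1) = v_2(2\cdot 2(k-\Omega_t)\log_2\overline{\omega})$? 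No — $\overline{\omega}^{2(k-\Omega_t)} = \exp_2\bigl(2(k-\Omega_t)\log_2 \overline{\omega}\bigr)$ directly. I expect the resolution is that $\log_2(1-\omega)$ actually has valuation $1$ but when combined with the factor $2$ and the fact that $\exp_2(y) - 1$ for $y \in 4\Z_2$ has the same valuation as $y$, and $2(k-\Omega_t)\log_2(1-\omega)$ has valuation $1 + 1 + v_2(k-\Omega_t) = 2 + v_2(k-\Omega_t)$; getting the final $3$ rather than $2$ means I have miscounted and in fact $v_2(\log_2(1-\omega)) = 2$, i.e. there is a cancellation in $-\omega - \omega^2/2 - \cdots$ between $-\omega$ (valuation $1$) and $-\omega^2/2$ (valuation $2 - 1 = 1$) — yes! $v_2(\omega^2/2) = 2 - 1 = 1$, so the first two terms both have valuation $1$ and can cancel modulo higher order, pushing $v_2(\log_2(1-\omega))$ up to $2$. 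Pinning down this cancellation precisely — showing $v_2(\log_2(1-\omega)) = 2$ exactly, equivalently $\omega + \omega^2/2 \equiv 0 \pmod{8}$ but $\not\equiv 0 \pmod{16}$ after including $\omega^3/3$ etc. — is the main obstacle and the computational heart of the lemma; once it is in hand, $v_2\bigl(\exp_2(2(k-\Omega_t)\log_2(1-\omega)) - 1\bigr) = v_2\bigl(2(k-\Omega_t)\log_2(1-\omega)\bigr) = 1 + 2 + v_2(k-\Omega_t) = 3 + v_2(k-\Omega_t)$, and since $v_2(1+t)=0$ the claim follows. I would likely isolate the valuation-of-$\log_2(1-\omega)$ computation as a short preliminary lemma or a displayed remark, citing or paralleling the corresponding step in \cite{chiriac-jorza:newton}.
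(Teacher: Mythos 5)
Your overall architecture coincides with the paper's: you characterize $\Omega_t\in\Z_2$ as the $2$-adic solution of $\overline{\omega}^{2\Omega_t-1}=(1+t)(\omega-\overline{\omega})$, i.e.\ $(\overline{\omega}^2)^{\Omega_t}=(1+t)\,\overline{\omega}(2\omega-1)=(1+t)(4-\overline{\omega})$ (this is where $t\equiv 0\pmod 8$ enters: that right-hand side must lie in $1+8\Z_2$, the image of $k\mapsto(\overline{\omega}^2)^k$, for $\Omega_t$ to lie in $\Z_2$ --- your check of membership in $1+4\Z_2$ is not quite enough for that); you then factor the quantity as $(1+t)\bigl((1-\omega)^{2(k-\Omega_t)}-1\bigr)$ and compute the valuation of the second factor. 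The paper does that last step by the Lifting the Exponent identity $v_2\bigl((1-\omega)^{2(k-\Omega_t)}-1\bigr)=v_2\bigl((1-\omega)^2-1\bigr)+v_2(k-\Omega_t)$, while you use the equivalent $\log_2/\exp_2$ formulation; these are the same argument.

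The genuine gap is the step you yourself label the ``computational heart'': you never prove $v_2(\log_2(1-\omega))=2$, equivalently $v_2\bigl((1-\omega)^2-1\bigr)=3$; you infer it backwards from the desired conclusion, which is circular as written. Moreover, the one algebraic identity you do assert, $\overline{\omega}^2=3\overline{\omega}-2$, is false: $\omega$ and $\overline{\omega}$ are the roots of $x^2-x+2$, so $\overline{\omega}^2=\overline{\omega}-2$; had you used your relation, you would get $(1-\omega)^2-1=3(\overline{\omega}-1)=-3\omega$, of valuation $1$, and your count would produce $1+v_2(k-\Omega_t)$ instead of $3+v_2(k-\Omega_t)$. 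The correct relation closes the gap in one line: $(1-\omega)^2-1=\omega^2-2\omega=\omega(\omega-2)=\omega\cdot\omega^2=\omega^3$, whose valuation is exactly $3$ since $v_2(\omega)=1$. Hence $(1-\omega)^2\in 1+8\Z_2$, $v_2\bigl(\log_2((1-\omega)^2)\bigr)=3$, so $v_2(\log_2(1-\omega))=2$, and your $\exp_2/\log_2$ computation then yields $v_2\bigl((1-\omega)^{2(k-\Omega_t)}-1\bigr)=1+2+v_2(k-\Omega_t)=3+v_2(k-\Omega_t)$ as claimed. With that verification supplied (and the quadratic relation corrected), your proof is complete and essentially identical to the paper's.
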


\begin{proof}
Note that $v_2(\overline{\omega})=v_2(1-\omega)=0$ and
$\omega-\overline{\omega}=-(1-2 \omega)$ is a unit. Therefore
\begin{align*}
v_2\left(\frac{\overline{\omega}^{2k-1}}{\omega-\overline{\omega
  }}-1-t\right)&=v_2\left(\overline{\omega}^{2k-1}+(1+t)(1-2\omega)\right)\\
  &=v_2\left(\overline{\omega}^{2k-1}-(1-\omega)^{2
    \Omega_t-1}\right)\\
  &=v_2\left((1-\omega)^{2(k-\Omega_t)}-1\right)\\
  &=v_2((1-\omega)^2-1)+v_2(k-\Omega_t)=3+v_2(k-\Omega_t)
\end{align*}
by the Lifting the Exponent Lemma. We remark that it is in the second equality that we use that $t$ is a multiple of 8, for convergence reasons.
\end{proof}
From Lemma \ref{l:v2} and equation \eqref{trT2} we immediately obtain that if $t$ is a multiple of 8 then
\[v_2({\rm Tr ~T_2} (2k,\SL_2(\mathbb{Z}))-t)\geq \min(k-1,3+v_2(k-\Omega_t)),\]
with equality whenever $v_2(k-\Omega_t)<k-4$.
This hypothesis is satisfied for all $k\gg 0$ (see \cite[Proposition 7]{chiriac-jorza:newton}), therefore yielding a proof of Proposition \ref{2-adic_valuation_Tr_T_2}. In order to obtain an effective result, we apply Lemma \ref{l:v2} directly and obtain the following:
\begin{corollary}\label{c}
If ${\rm Tr ~T_2}(2k,\SL_2(\mathbb{Z}))={\rm Tr ~T_2}(2k',\SL_2(\mathbb{Z}))$ for $k,k'\geq 6$ then $k\equiv k'\pmod{4}$.
\end{corollary}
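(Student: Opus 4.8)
The plan is to establish the contrapositive: if $k,k'\ge 6$ and $k\not\equiv k'\pmod 4$, then ${\rm Tr ~T_2}(2k,\SL_2(\Z))\neq{\rm Tr ~T_2}(2k',\SL_2(\Z))$. By symmetry we may assume $k<k'$. The argument is entirely $2$-adic and uses only the closed form \eqref{trT2} together with the elementary facts recorded in the proof of Lemma~\ref{l:v2}: $v_2(\omega)=1$, $v_2(\overline\omega)=v_2(\omega-\overline\omega)=0$, and $v_2(\overline\omega^2-1)=3$.

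First I would subtract the two instances of \eqref{trT2}; the additive constant $-1$ cancels, leaving
\[
{\rm Tr ~T_2}(2k',\SL_2(\Z))-{\rm Tr ~T_2}(2k,\SL_2(\Z))=\frac{\overline\omega^{2k'-1}-\overline\omega^{2k-1}}{\omega-\overline\omega}-\frac{\omega^{2k'-1}-\omega^{2k-1}}{\omega-\overline\omega}+\bigl(\varepsilon_{k'}2^{k'-1}-\varepsilon_k2^{k-1}\bigr),
\]
where $\varepsilon_k,\varepsilon_{k'}\in\{0,\pm1\}$ are the signs $\varepsilon$ appearing in \eqref{trT2}, determined by $k,k'$ modulo $4$ via Proposition~\ref{a(n)}. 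The core of the proof is then to compute the $2$-adic valuation of each of the three groups on the right. Pulling $\overline\omega^{2k-1}$ out of the first group and invoking the Lifting the Exponent Lemma exactly as in the proof of Lemma~\ref{l:v2}, its valuation equals $v_2(\overline\omega^{2(k'-k)}-1)=v_2(\overline\omega^2-1)+v_2(k'-k)=3+v_2(k'-k)$, which is at most $4$ since $4\nmid k'-k$. Pulling $\omega^{2k-1}$ out of the second group and noting that $\omega^{2(k'-k)}-1$ is a unit (because $\omega$ has positive valuation), its valuation equals $2k-1\ge 11$. Finally $\varepsilon_{k'}2^{k'-1}-\varepsilon_k2^{k-1}$ has valuation at least $k-1\ge 5$, with the convention $v_2(0)=\infty$ (it can only vanish when $\varepsilon_k=\varepsilon_{k'}=0$, as $|\varepsilon_k|,|\varepsilon_{k'}|\le 1$ and $k<k'$). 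Hence the first group strictly dominates the other two in the ultrametric, so the right-hand side has valuation at most $4$; in particular it is nonzero, which is what we wanted.

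I do not expect a genuine obstacle, only routine bookkeeping: the argument works precisely because the three valuations separate, and the hypotheses are exactly tuned to this. The condition $k\not\equiv k'\pmod 4$ keeps $v_2(k'-k)\le 1$ and hence the leading group's valuation in $\{3,4\}$, while $k\ge 6$ forces both $2k-1$ and $k-1$ to be at least $5$, so neither of the other two groups can interfere. To be safe I would separate the two subcases $k\equiv k'\pmod 2$ (leading valuation exactly $4$) and $k\not\equiv k'\pmod 2$ (leading valuation exactly $3$) and check that both close, though they are identical in form.
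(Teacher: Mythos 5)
Your argument is correct, and it reaches the conclusion by a genuinely different (and more direct) route than the paper. The paper applies Lemma~\ref{l:v2} twice, at $t=0$ and $t=8$, to get the valuations of ${\rm Tr ~T_2}(2k,\SL_2(\Z))$ and ${\rm Tr ~T_2}(2k,\SL_2(\Z))-8$ as functions of $k\bmod 4$ via the residues of $\Omega$ and $\Omega_8$ modulo $4$, and then rules out mismatched residues by a case-by-case comparison of the resulting table; this requires some care because the formula $v_2({\rm Tr ~T_2}-t)=3+v_2(k-\Omega_t)$ is only an inequality unless $v_2(k-\Omega_t)<k-4$. You instead compute $v_2$ of the difference of the two traces directly from \eqref{trT2}: the $\overline\omega$-block has valuation exactly $3+v_2(k'-k)\le 4$ by the same Lifting the Exponent computation that powers Lemma~\ref{l:v2} (since $1-\omega=\overline\omega$ and $v_2(\overline\omega^2-1)=3$), while the $\omega$-block and the $\varepsilon 2^{k-1}$-block have valuations $2k-1\ge 11$ and $\ge k-1\ge 5$ respectively, so the difference has valuation at most $4$ and cannot vanish. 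This buys a cleaner proof: you never need the constants $\Omega_t$, the shift $t=8$, the table, or the effectivity caveat $v_2(k-\Omega_t)<k-4$, and you get the sharper statement $v_2\bigl({\rm Tr ~T_2}(2k')-{\rm Tr ~T_2}(2k)\bigr)=3+v_2(k'-k)$ for $6\le k<k'$ with $k'-k$ not too $2$-divisible. What the paper's formulation buys in exchange is Proposition~\ref{2-adic_valuation_Tr_T_2} itself (the valuation of a single trace, not just of differences), which is of independent interest and is the statement imported from \cite{chiriac-jorza:newton}. The only points worth making explicit in a write-up are the two unit computations you implicitly use, $v_2(\omega-\overline\omega)=v_2(\sqrt{-7})=0$ and $v_2(\omega^{2(k'-k)}-1)=0$, both of which are already recorded in the proof of Lemma~\ref{l:v2}.
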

\begin{proof}
As $\Omega\equiv 3\pmod{4}$ and $\Omega_8\equiv 2\pmod{4}$ we see that
\begin{center}
\begin{tabular}{lll}
  $k\mod 4$ & $v_2(k-\Omega)$ & $v_2(k-\Omega_8)$\\
  \hline
  0 & 0 & 1\\
  1 & 1 & 0\\
  2 & 0 & $\geq 2$\\
  3 & $\geq 2$& 0
\end{tabular}
\end{center}
If the two traces are equal, the valuations of ${\rm Tr ~T_2}(2k,\SL_2(\Z))-t$ and ${\rm Tr ~T_2}(2k',\SL_2(\Z))-t$ must be
equal for all $t$. We have $v_2({\rm Tr ~T_2}(2k,\SL_2(\mathbb{Z})))\geq \min(k-1,3+v_2(k-\Omega))$ with equality if $k\not\equiv 3\pmod{4}$, as then
$v_2(k-\Omega)\leq 1<k-4$. Similarly, $v_2({\rm Tr ~T_2}(2k,\SL_2(\mathbb{Z}))-8)\geq \min(k-1,3+v_2(k-\Omega_8))$ with equality if
$k\not\equiv 2\pmod{4}$, as then $v_2(k-\Omega_8)\leq 1<k-4$.

The desired congruence now follows from a case-by-case analysis using the above table.
\end{proof}

We conclude that if ${\rm Tr
  ~T_2}(2k,\SL_2(\mathbb{Z}))=-a_{k-1}-1+\varepsilon 2^{k-1}$
has a repeated value, then one of the following sequences has a
repeated value: $\{a_n \}$, $\{ a_n+2^n\}$, $\{ a_n-2^n\}$. Technically, Corollary \ref{c} only applies to repeated values when $k\geq 6$. However, as ${\rm Tr
  ~T_2}(2k,\SL_2(\mathbb{Z}))=0$ for $k<6$ and $k=7$, this conclusion applies to all $k\geq 2$.

We will treat each of these cases separately.

\section{Repeated values in the sequence \texorpdfstring{$\{a_n\}$}{a\_n}}

The first few terms of the sequence $\{a_n\}_{n\geq 0}$ defined in \eqref{def_a_n} are 
$$1,-1,-1,7,-17,23,-1,-89,271,-457,287,\ldots$$
The purpose of this section is to show that, except for the three occurrences of the value -1, there are no other repeated values. Our main tool is the method of linear forms in logarithms. 

\bigskip

If $\alpha$ is an algebraic number with minimal polynomial $$a_mx^m+\ldots +a_0=a_m(x-r_1)\ldots(x-r_m)\in \Z[x]$$ then its (logarithmic) height is given by 
$$h(\alpha)=\frac{1}{m} \left( \log |a_m|+\sum_{j=1}^m \max (0, \log |r_i| ) \right).$$

One of the sharpest estimates for linear forms in two logarithms is due to Laurent, Mignotte and Nesterenko; we recall here one of their results (\cite[Th\'eor\'eme 3]{LMN}).

\begin{lemma} \label{sharp_two_log}
Let $\alpha$ be an algebraic number, which is not a root of unity and $|\alpha|=1$. Let $b_1$ and $b_2$ be a positive integers, and 
$$\Lambda=b_1i \pi -b_2\log \alpha.$$ Set $D=[\Q(\alpha):\Q]/2$, $a =\max \{ 20, 10.98  |\log \alpha |+ Dh(\alpha) \}$, and
\begin{align*}
H &=\max \{ 17, \frac{\sqrt{D}}{10}, \log \left( \frac{b_1}{2a}+\frac{b_2}{68.9} \right) +2.35D+5.03 \}.
\end{align*} Then $$\log |\Lambda |\geq -8.87 aH^2.$$
\end{lemma}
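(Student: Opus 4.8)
The plan is to establish the inequality by the method of interpolation determinants, which is the technique underlying the cited result of Laurent--Mignotte--Nesterenko and is what produces the sharp constant $8.87$. The starting point is to read $\Lambda=b_1 i\pi-b_2\log\alpha$ as a $\Z$-linear combination $b_1\log(-1)-b_2\log\alpha$ of the logarithms of the two algebraic numbers $-1$ and $\alpha$. Since $\alpha$ is not a root of unity, these two logarithms admit no nontrivial $\Q$-linear relation, so $\Lambda\neq 0$. I would then argue by contradiction: assume $\log|\Lambda|$ is strictly smaller than $-8.87\,aH^2$ and aim to derive an impossibility from the resulting clustering of interpolation data.

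First I would fix a system of integer parameters: row dimensions $L_0,L_1$ governing the auxiliary monomials $z^{\lambda_0}\alpha^{\lambda_1 z}$, together with a grid of interpolation points parametrized by pairs of integers in suitable ranges. These parameters are calibrated against $h(\alpha)$, $|\log\alpha|$, the degree $D$, and the sizes $b_1,b_2$. From them I would assemble the square interpolation matrix $\mathcal M$ of size $L=L_0L_1$, whose entry in row $(\lambda_0,\lambda_1)$ and in the column indexed by a grid point is the corresponding value of the monomial $z^{\lambda_0}\alpha^{\lambda_1 z}$, rewritten through the relation $b_2\log\alpha=b_1 i\pi-\Lambda$ so that every entry becomes an algebraic number of controlled height (integral powers of $\alpha$ and factors $(-1)^{(\cdot)}$, up to a small perturbation carrying $\Lambda$). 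Set $\Delta=\det\mathcal M$. The exact normalization of the entries and the precise shape of the point set are where the competing numerical constants $10.98$, $68.9$, $2.35$, and $5.03$ appearing in $a$ and $H$ get fixed.

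The heart of the argument is to estimate $\Delta$ in two opposite directions. Analytically, because $|\Lambda|$ is assumed minuscule, the relation $(-1)^{b_1}\approx\alpha^{b_2}$ makes the interpolation points cluster, so the columns of $\mathcal M$ are nearly the values of a single analytic exponential-polynomial matrix; applying Schwarz's lemma in the form of the maximum-modulus bound for interpolation determinants yields $\log|\Delta|\le -c\,L^2+(\text{exponent})\cdot\log|\Lambda|+\cdots$, whose leading negative term dominates exactly when the claimed bound on $\log|\Lambda|$ is violated. Arithmetically, the entries are algebraic numbers of bounded degree and height, so the product formula (a Liouville-type inequality) gives $\log|\Delta|\ge -C\,L^2$ provided $\Delta\neq 0$. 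Comparing the two estimates under the chosen calibration of the parameters forces the desired inequality $\log|\Lambda|\ge -8.87\,aH^2$.

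The step I expect to be the main obstacle is twofold. First, one must secure the nonvanishing $\Delta\neq 0$ through a zero estimate: the selected monomials cannot all vanish to the prescribed orders at the chosen points. This rests on the $\Q$-linear independence of $1$ and $\log\alpha/(i\pi)$, which is precisely the hypothesis that $\alpha$ is not a root of unity, together with a combinatorial nondegeneracy of a confluent generalized Vandermonde. Second, and more delicate, is the optimization: extracting the sharp constant $8.87$ rather than some larger admissible value demands a near-optimal balancing of $L_0,L_1$ and the point ranges against both the analytic and the arithmetic bounds, and this optimization is the genuine technical engine of the estimate. Since the statement is quoted verbatim from \cite{LMN}, I would carry out the structural argument above to make the mechanism transparent but ultimately defer the full parameter optimization to that reference and invoke the bound as a black box.
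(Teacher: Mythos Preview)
The paper does not prove this lemma; it simply quotes it as \cite[Th\'eor\`eme 3]{LMN} and uses it as a black box. Your proposal correctly identifies the underlying mechanism (interpolation determinants, analytic upper bound via Schwarz, arithmetic lower bound via Liouville, zero estimate, parameter optimization) and is an honest outline of how Laurent--Mignotte--Nesterenko actually argue, but this is far more than the paper itself supplies. In the end you also defer the sharp constants to \cite{LMN}, so functionally your ``proof'' and the paper's treatment agree: the result is invoked, not reproved.

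If anything, your sketch is more informative than the paper's bare citation, but be aware that what you have written is a proof \emph{plan}, not a proof: the calibration of $L_0,L_1$ and the interpolation grid against the specific constants $8.87$, $10.98$, $68.9$, $2.35$, $5.03$ is the entire content of the result, and you explicitly leave it undone. For the purposes of this paper that is exactly right---the lemma is imported technology, and a citation suffices.
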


\bigskip

\begin{remark} \label{sharp_variant}
We will be concerned with algebraic numbers $\alpha$ of degree 2, i.e., those for which $D=1$. In this setting, an easy consequence of the above result (see \cite[Theorem 2.6]{Bugeaud} for a proof) is that for all positive integers $n$ we have
$$\log|\alpha^n-1| \geq -9aH_1^2,$$
where $H_1=\max\{ 17,\log \left( \frac{n}{25} \right)+7.45\}$. 
\end{remark}

\begin{proposition} \label{compare_a_n}
Consider the sequence $\{a_n\}$ defined in \eqref{def_a_n}. There are no two distinct integers $5\leq n<m$ such that $a_n=a_m.$ In particular, as $k\geq 6$ ranges over the integers congruent to 2 or 3 modulo 4, all the values of ${\rm Tr ~T_2}(2k,\SL_2(\Z))$ are distinct. 
\end{proposition}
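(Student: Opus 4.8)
The plan is to work with the explicit closed form for $a_n$. Solving the recurrence \eqref{def_a_n}, one finds $a_n = \frac{\overline\omega^{2n+1} - \omega^{2n+1}}{\overline\omega - \omega}\cdot(\text{const})$ — more precisely, writing $\omega = \frac{1-\sqrt{-7}}{2}$ so that $\omega\overline\omega = 2$ and $\omega+\overline\omega = 1$, the characteristic roots of $x^2+3x+4$ are $-\omega^2$ and $-\overline\omega^2$, and matching $a_0=1$, $a_1=-1$ gives $a_n = \frac{\overline\omega^{2n+1}-\omega^{2n+1}}{\overline\omega-\omega}$ up to sign conventions (this is essentially the content of \eqref{trT2}). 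In particular $|\!-\!\omega^2| = |{-}\overline\omega^2| = 2$, so $|a_n|$ grows like $2^n$, and $a_n = \pm 2^n\cdot\frac{\overline\omega^{2n+1}/2^n}{\overline\omega-\omega}(1 + o(1))$. The key is that $\overline\omega/|\overline\omega|$ has infinite order on the unit circle, so $\overline\omega^{2n+1}/2^n$ equidistributes.

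\medskip

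First I would normalize: set $\alpha = \overline\omega/\omega$, which has absolute value $1$ (complex-conjugate pair of equal modulus) and is not a root of unity — this needs a short argument, e.g. its minimal polynomial $x^2 - (\text{tr})x/2 + 1$ over $\Q$ has non-integral or otherwise non-cyclotomic coefficients, or one checks directly that $\omega/\overline\omega$ generates a ramified/split extension incompatible with roots of unity. Then $a_m = a_n$ with $m<n$ rearranges, after dividing by $\omega^{2m+1}/(\overline\omega-\omega)$ and using $\overline\omega^j - \omega^j$ telescoping, into an equation of the shape $\overline\omega^{\,2(n-m)}\cdot(\text{unit}) = 1 + (\text{something of size }2^{-m}\cdot\text{bounded})$, i.e. $|\alpha^N \beta - 1|$ is extremely small, where $N = n-m$ and $\beta$ is a fixed nonzero algebraic number; equivalently the linear form $\Lambda = b_1 i\pi - b_2 \log\alpha$ (with $b_2 = N$ or a bounded multiple, $b_1$ comparably sized) satisfies $\log|\Lambda| \le -c\, m$ for an explicit constant $c>0$. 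This is the lower-bound-meets-upper-bound squeeze.

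\medskip

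Second, I would invoke Lemma~\ref{sharp_two_log} (or the packaged Remark~\ref{sharp_variant}) to get $\log|\Lambda| \ge -C\log^2(N)$ for explicit $C$, with $D=1$, $a$ an explicit constant depending only on $h(\alpha)$ and $|\log\alpha|$, and $H_1 = \max\{17, \log(N/25)+7.45\}$. Combining with the archimedean upper bound $\log|\Lambda| \le -c\,m + O(1)$ and the trivial inequality $N = n-m \le n$, the trivial crude bound $m \le n$ is not enough; I need instead a relation forcing $m$ large relative to $N$ — but $N$ can be as large as $n-m$, which is awkward. The way around this: the quantity that is exponentially small is governed by $\min(m, n)=m$, while the logarithmic lower bound is in terms of $n$ (through $b_2 \approx 2n+1$). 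So the squeeze reads $c\,m \lesssim \log^2 n$, which together with $m \ge 5$... does not immediately bound $n$. \textbf{This mismatch is the main obstacle}, and the resolution is the standard one in this subject: one must also bound $n - m$ from the \emph{size} comparison $|a_m| = |a_n|$, which forces $n - m$ to be small (since $|a_n| \asymp 2^n$ strictly increases in magnitude up to the bounded oscillating factor, $|a_m| = |a_n|$ already pins $n - m \le$ some absolute constant, say by showing $|a_{n}| > |a_m|$ once $n \ge m + O(1)$ via $|\,|a_n| - 2^n/|\overline\omega-\omega|\,| \le 2^{?}$ type estimates). Then with $N$ bounded, $b_2$ bounded, $\log|\Lambda| \ge -C'$ is an absolute constant, so $c\,m \le C' + O(1)$ bounds $m$, hence bounds $n$, and a finite check (for $5 \le m < n \le$ explicit bound) completes the argument. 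The ``in particular'' clause is then immediate from Proposition~\ref{a(n)}: for $k \equiv 2, 3 \pmod 4$, ${\rm Tr ~T_2}(2k,\SL_2(\Z)) = -a_{k-1}-1$, so distinct such $k \ge 6$ give $k-1 \ge 5$ distinct indices, hence distinct $a_{k-1}$, hence distinct traces.

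\medskip

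To summarize the intended execution order: (1) diagonalize the recurrence to get $a_n$ in terms of $\omega, \overline\omega$ and record $|a_n| = 2^n/|\overline\omega - \omega| + O(2^{n/2})$-type asymptotics; (2) show $a_m = a_n$ for $m < n$ forces $n - m$ bounded by an absolute constant, purely from the size estimate; (3) for each of the finitely many values of $N = n-m$, rewrite $a_m = a_{m+N}$ as $|\alpha^{b_2}\beta_N - 1|$ exponentially small in $m$; (4) apply Lemma~\ref{sharp_two_log}/Remark~\ref{sharp_variant} to bound $m$ (and hence $n$) explicitly; (5) finish by direct computation over the remaining finite range, noting the only collisions are the three $-1$'s at $n = 1, 2, 6$, all below $m = 5$. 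The delicate point requiring care is step (2)–(3): making the "size forces $n-m$ small" argument fully rigorous and effective, and correctly identifying $\beta_N$ as a genuinely nonzero algebraic number of degree $\le 2$ so that the two-logarithm machinery applies.
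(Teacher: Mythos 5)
Your step (2) is a genuine gap, and it is the load-bearing step. The recurrence $x^2+3x+4$ has \emph{complex conjugate} roots $-\omega^2,-\overline\omega^2$ of equal modulus $2$, so there is no dominant-root asymptotic of the form $|a_n| = 2^n/|\overline\omega-\omega| + O(2^{n/2})$: writing $\omega=\sqrt2\,e^{-i\theta}$ one has $|a_n| = c\,2^n\,|\sin((2n+1)\theta)|$, and the oscillating factor is bounded above but \emph{not} below — it can be (and a priori could be exponentially) close to $0$. Concretely $a_6=-1$ while $2^6=64$, so $|a_n|$ is not even eventually monotone, and ``$|a_m|=|a_n|$ forces $n-m=O(1)$ by size'' is false. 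Worse, obtaining \emph{any} effective lower bound on $|a_n|/2^n$ is precisely the content of the linear-forms-in-logarithms input, so using such a bound to set up step (2) before invoking Lemma~\ref{sharp_two_log} is circular. There is also a secondary flaw in your step (3): if $a_m=a_{m+N}$ holds exactly, then for fixed $N$ the quantity you call $|\alpha^{b_2}\beta_N-1|$ is exactly zero, not ``exponentially small,'' and the Laurent--Mignotte--Nesterenko bound requires a nonvanishing form; the squeeze has to be arranged so that the form being bounded below (here $\alpha^{2n+1}-1$ with $\alpha=\omega/\overline\omega$ not a root of unity) is automatically nonzero.

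The paper resolves the mismatch you flagged in the opposite direction, and non-archimedeanly: instead of bounding the gap $n-m$, a $2$-adic valuation computation on $\omega^{m}(\omega^{\,n-m}-1)=-\overline\omega^{\,m}(1-\overline\omega^{\,n-m})$ (using $v_2(\omega)=1$, $v_2(\overline\omega)=0$, and $v_2(1-\overline\omega^{\,n-m})\le v_2(n-m)+2$) forces the \emph{smaller} exponent to satisfy $m\le \log_2 n+2$, with no control on $n-m$ needed. Then $|a_n|=|a_m|\le 2^{m/2+1}$ is only polynomial in $n$, while Remark~\ref{sharp_variant} applied to $\alpha=\omega/\overline\omega$ gives $\log|\omega^{2n+1}-\overline\omega^{2n+1}|>(n+\tfrac12)\log 2-2^9(\log n)^2$; these are incompatible once $n$ exceeds an explicit threshold ($352000$ in the paper), and a finite computation below the threshold finishes. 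So the overall architecture you propose (closed form, two-log lower bound, finite check) is the right family of ideas, but the missing ingredient is the $2$-adic step that makes the exponential upper bound genuinely small; without it your argument does not close.
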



\begin{proof}
Since $a_n=\frac{1}{\omega-\overline{\omega}} (\omega^{2n+1}-\overline{\omega}^{2n+1} )$ with $\omega=\frac{1-\sqrt{-7}}{2}$, it is enough to show that the equation 
$$\omega^n-\overline{\omega}^n=\omega^m-\overline{\omega}^m$$ has no integer solutions $11\leq n<m$. Assume, by contradiction, that such a solution exists. Then we consider the 2-adic valuation of both sides of the identity
\begin{equation} \label{ident_omega_omega_bar}
\omega^n(\omega^{m-n}-1)=-\overline{\omega}^n (1-\overline{\omega}^{m-n}).
\end{equation}
On the one hand, since $v_2(\omega)=1$, it is clear that 
$$v_2 \left( \omega^n(\omega^{m-n}-1) \right) = v_2(\omega^n)+v_2(\omega^{m-n}-1)\geq n. $$
On the other hand, $v_2(\overline{w})=0$ and $v_2(1- \overline{w})=1$, so
\begin{align*}
v_2 \left( -\overline{\omega}^n(1-\overline{\omega}^{m-n}) \right) &= v_2(1-\overline{\omega}^{m-n}) \\
&\leq v_2(m-n)+2 \\ 
& \leq \frac{ \log(m)}{ \log(2)}+2.
\end{align*}
Hence (\ref{ident_omega_omega_bar}) leads to the following inequality
$$n \leq \frac{\log(m)}{\log(2)}+2.$$ In turn, this implies that 
\begin{align*}
|\omega^m-\overline{\omega}^m| &=|\omega^n-\overline{\omega}^n| \\ 
& \leq 2|\omega|^n=2^{n/2+1} \\
& \leq 2^{\log(m)/ \log(4)+2},
\end{align*} or that 
\begin{equation} \label{upper_bound_omega^n-omega_bar}
\log |\omega^m-\overline{\omega}^m| \leq \frac{\log(m)}{2} +\log(4).
\end{equation}

Consider the algebraic number $$\alpha=\frac{\omega}{\overline{\omega}}=-\frac{3}{4}-i\frac{\sqrt{7}}{4}.$$ The minimal polynomial of $\alpha$ is $2x^2+3x+2$, so that 
$h(\alpha)=\log(2)/2$. We also note that $|\log(\alpha)| \approx 2.418858$, thus 
$10.98  |\log \alpha |+ h(\alpha)\approx 26.9056>20.$ Finally, $\log \left( \frac{m}{25} \right)+7.45>17$ for $m\geq 352000$. In this range, the results of Laurent, Mignotte and Nesterenko (see Remark~\ref{sharp_variant}) imply that 
$$
\log|\alpha^m-1| \geq -9 \cdot 26.9056  \left( \log \left( \frac{m}{25}\right)+7.45 \right)^2  > -2^9 (\log m)^2,$$
 and thus 
\begin{equation} \label{lower_bound_omega^n-omega_bar}
\log |\omega^m-\overline{\omega}^m |>\frac{m}{2} \log(2)-2^9 (\log m)^2. 
\end{equation}
Therefore, comparing (\ref{lower_bound_omega^n-omega_bar}) and (\ref{upper_bound_omega^n-omega_bar}), we obtain
$$\frac{m}{2} \log(2)-2^9 (\log m)^2 < \frac{\log(m)}{2} +\log(4),$$ which is false for $m>352000$. In other words, if a solution $11\leq n<m$ existed then $m<352000$. However, with the help of a computer it can be easily verified that there are also no solutions in this range. 
\end{proof}

\bigskip


\section{Repeated values in the sequences \texorpdfstring{$\{a_n\pm 2^n\}$}{a\_n+2^n}}

The main result of this section is the following.

\begin{proposition}
Consider the sequence $\{a_n\}$ defined in \eqref{def_a_n} and $\varepsilon\in\{\pm1\}$. The only pair of distinct positive integers $(n,m)$ such that
\begin{equation} \label{a_n-2^n}
a_n-\varepsilon 2^n=a_m- \varepsilon 2^m
\end{equation}
is $(5,9)$, and this occurs when $\varepsilon=-1$. In particular, as $k\geq 8$ ranges over the multiples of $4$, all the values of ${\rm Tr ~T_2}(2k,\SL_2(\Z))$ are distinct. Similarly, as $k\geq 9$ ranges over the integers $\equiv 1\Mod 4$, all the values of ${\rm Tr ~T_2}(2k,\SL_2(\Z))$ are distinct.
\end{proposition}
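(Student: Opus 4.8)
The plan is to follow the same strategy as in the proof of Proposition~\ref{compare_a_n}, but now applied to the shifted sequences $\{a_n - \varepsilon 2^n\}$. Using the closed form $a_n = \frac{1}{\omega - \overline\omega}(\omega^{2n+1} - \overline\omega^{2n+1})$ together with $2 = \omega\overline\omega$ (so that $2^n = \omega^n \overline\omega^n$), the equation \eqref{a_n-2^n} becomes a relation among the four quantities $\omega^{2n+1}$, $\overline\omega^{2n+1}$, $\omega^{2m+1}$, $\overline\omega^{2m+1}$ and the powers $\omega^n\overline\omega^n$, $\omega^m\overline\omega^m$. First I would clear denominators to obtain a polynomial identity in $\omega$ and $\overline\omega$ and factor out the largest common power of $\omega$ (respectively $\overline\omega$) on each side, isolating a factor of the shape $\omega^{\text{(something)}} - 1$ (or $\overline\omega^{\text{(something)}} - 1$) whose $2$-adic valuation can be bounded by the Lifting the Exponent Lemma, exactly as in the previous section. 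Since $v_2(\omega) = 1$ while $v_2(\overline\omega) = 0$, comparing $2$-adic valuations of the two sides will force a linear-in-$m$ lower bound against a logarithmic-in-$n$ upper bound, yielding $m \ll \log n$.

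Next, I would feed this back into an archimedean estimate. Bounding $|a_m - \varepsilon 2^m|$ from above by $O(2^{m/2})$ (since $|\omega| = \sqrt 2$ and $2^m$ is dominated) and hence by a power of $n$, while $|a_n - \varepsilon 2^n|$ grows like $|\omega|^{2n+1} = 2^{n+1/2}$ up to lower-order terms, I expect to be able to write $\log|a_n - \varepsilon 2^n|$ as $n\log\sqrt 2$ plus a correction governed by a linear form in two logarithms --- namely something like $\log\bigl|\,(\omega/\overline\omega)^{2n+1} \cdot (\text{root of unity or algebraic correction}) - 1\,\bigr|$ or a closely related expression involving $\alpha = \omega/\overline\omega$ and the contribution of the $\varepsilon 2^n$ term. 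Applying the Laurent--Mignotte--Nesterenko bound (Lemma~\ref{sharp_two_log}, or the Bugeaud-style consequence in Remark~\ref{sharp_variant}) then gives a lower bound of the form $\log|a_n - \varepsilon 2^n| \geq \frac{n}{2}\log 2 - c(\log n)^2$. Combining the two bounds produces an inequality $\frac{n}{2}\log 2 - c(\log n)^2 < C\log n$, false for $n$ beyond an explicit threshold, after which a finite computer check settles the remaining small cases and turns up the single exceptional pair $(5,9)$ with $\varepsilon = -1$.

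The main obstacle I anticipate is the presence of the extra term $\varepsilon 2^n = \varepsilon\,\omega^n\overline\omega^n$: unlike the clean two-term relation $\omega^n - \overline\omega^n = \omega^m - \overline\omega^m$ of the previous section, equation \eqref{a_n-2^n} is genuinely a relation among \emph{three} $S$-unit-like terms on each side (after scaling), so it is not immediately a linear form in two logarithms. The trick will be that $2^n$ and $\omega^{2n+1}/\overline\omega^{\,0}$ differ multiplicatively by a power of $\overline\omega$ only, i.e. $2^n/\omega^{2n+1} = \overline\omega^n/\omega^{n+1}$ has $2$-adic valuation that can be controlled, so after dividing through by the dominant archimedean term one of the three summands becomes negligible and the remaining ratio is again an algebraic number of degree $2$ raised to a power, minus something bounded --- putting us back in the two-logarithm setting. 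Making this reduction precise, and checking that the resulting algebraic number (again essentially $\alpha = \omega/\overline\omega$, of minimal polynomial $2x^2 + 3x + 2$ and height $\tfrac12\log 2$) is not a root of unity so that Lemma~\ref{sharp_two_log} applies, is where the real care is needed; once that is in place the numerics are a routine repeat of the $n < 352000$-style check from Proposition~\ref{compare_a_n}, carried out separately for $\varepsilon = +1$ and $\varepsilon = -1$.
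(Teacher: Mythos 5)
Your overall architecture (a 2-adic valuation comparison forcing the smaller index to be logarithmic in the larger, then an archimedean lower bound from linear forms in two logarithms, then a finite computer check) is the paper's, and your 2-adic step is essentially what the paper does. The genuine gap is exactly at the point you flag as the main obstacle, and the workaround you propose does not function. After clearing the denominator $\omega-\overline\omega$, the quantity to bound from below is $\omega^{2n+1}-\overline\omega^{2n+1}-\varepsilon 2^n(\omega-\overline\omega)$, and its three summands have absolute values $\sqrt2\cdot 2^n$, $\sqrt2\cdot 2^n$ and $\sqrt7\cdot 2^n$: they are all of the same order (indeed $|2^n/\omega^{2n+1}|=1/\sqrt2$ is a constant), so dividing by ``the dominant archimedean term'' never makes one of them negligible, and the expression does not reduce to an algebraic number of degree $2$ raised to a power minus something bounded. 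As written, your plan would need linear forms in \emph{three} logarithms, not Lemma~\ref{sharp_two_log}; likewise the assertion that $|a_n-\varepsilon 2^n|$ grows like $2^{n+1/2}$ ``up to lower-order terms'' begs the question, since the possible cancellation among the three comparable terms is precisely what must be excluded.

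The missing idea, which is what the paper uses, is that the three-term expression factors into a product of two binomials,
\begin{equation*}
a_n-\varepsilon 2^n=\frac{(\omega^{n}-\varepsilon\overline\omega^{\,n})(\omega^{\,n+1}+\varepsilon\overline\omega^{\,n+1})}{\omega-\overline\omega},
\end{equation*}
which in the paper appears in trigonometric form: writing $\omega=\sqrt2\,e^{i\theta}$ one has $\sin((2n+1)\theta)-\sin(\varepsilon\theta)=2\sin(n\theta)\cos((n+1)\theta)$ for $\varepsilon=1$ (and similarly for $\varepsilon=-1$). Each factor is then bounded below by a \emph{separate} application of Laurent--Mignotte--Nesterenko to a linear form $N\log e^{2i\theta}-k i\pi$, with $k$ the nearest integer of the appropriate parity (plus an argument that the form is nonzero, since $\theta/\pi$ is irrational), and multiplying the two bounds yields $\log|a_m-\varepsilon 2^m|\geq m\log 2+\log\sqrt{8/7}-2^9(\log m+4.385)^2$, which is the inequality your sketch needs. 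Two smaller points: your upper bound $O(2^{m/2})$ for the small-index side should be $O(2^m)$ (harmless, as that index is logarithmic in the other), and to deduce the trace statements you must still observe that the exceptional solution $a_5+2^5=a_9+2^9$ does not force equal traces, because by Proposition~\ref{a(n)} the $\varepsilon=-1$ case only involves indices $k-1\equiv 0\Mod 4$, whereas $5$ and $9$ are $\equiv 1\Mod 4$.
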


\begin{proof}
Suppose that (\ref{a_n-2^n}) has a solution with $n<m$. First, we will show that 
	\begin{equation} \label{n_less_log(m)}
		 n  \leq 3+ \frac{\log(m)}{ \log(2)}. 
	\end{equation}
As before, set $\omega=\frac{1}{2}-i\frac{\sqrt{7}}{2}$ so that $a_m=\frac{1}{\sqrt{-7}}(\omega^{2m+1}-\overline{\omega}^{2m+1} ).$ Equation (\ref{a_n-2^n}) implies
\begin{equation} \label{LHS_vs_RHS}
\varepsilon(2^m-2^n)+\frac{1}{\sqrt{-7}}(\omega^{2n+1}-\omega^{2m+1})=\frac{1}{\sqrt{-7}} \overline{\omega}^{2n+1}(1-\overline{\omega}^{2m-2n}).
\end{equation}
The left-hand side of (\ref{LHS_vs_RHS}) can also be expressed as
$$ 2^n \left( \varepsilon (2^{m-n}-1) +\frac{\omega}{\sqrt{-7}} \left( \frac{\omega^2}{2} \right)^n (1-\omega^{2m-2n} ) \right).$$ Since $v_2(\omega)=1$ and $v_2(\overline{\omega})=0$, we find that the 2-adic valuation of the above expression is 
\begin{align*}
& n+v_2 \left( \varepsilon (2^{m-n}-1)+\frac{\omega}{\sqrt{-7}} \left( \frac{\omega^2}{2} \right)^n (1-\omega^{2m-2n}) \right) \\ &  \geq n+ \min \left \{ v_2(2^{m-n}-1), v_2 \left( \frac{\omega}{\sqrt{-7}} \left( \frac{\omega}{\overline{\omega}} \right)^n (1-\omega^{2m-2n} ) \right) \right \} \\
&= n+\min \{ 0, 1+n \}=n.
\end{align*}
At the same time, the 2-adic valuation of the right-hand side of (\ref{LHS_vs_RHS}) is 
\begin{align*}
v_2 \left( 1-(\overline{\omega}^2)^{m-n} \right) & = v_2 \left( 1-(\overline{\omega}^2) \right) +v_2(m-n) \\
&= 3 +v_2(m-n) \\
& \leq 3 + \frac{\log(m)}{ \log(2)}. 
\end{align*}
Together, the previous two inequalities imply (\ref{n_less_log(m)}). Since $|\omega|^2=2$ and $|\omega-\overline{\omega}|=\sqrt{7}$, we obtain
\begin{align*}
|a_n-\varepsilon 2^n| &= \left | \frac{\omega}{ \omega -\overline{\omega}} (\omega^2)^n -
 \frac{\overline{\omega}}{ \omega -\overline{\omega}} ( \overline{\omega}^2)^n
-\varepsilon 2^n \right| \\
& < 3\cdot 2^n \\
& < 3\cdot 2^{3+\log(m)/\log(2)},
\end{align*}
or equivalently 
\begin{equation} \label{upper_bound_log|a_n-2^n|}
\log |a_m-\varepsilon 2^m| = \log |a_n-\varepsilon 2^n| \leq \log(24m).
\end{equation}
Our next step will be to find a lower bound for $\log |a_m-\varepsilon 2^m|$. Writing $w=\sqrt{2}e^{i\theta}$, with $\theta\in [\frac{3\pi}{2},2\pi]$, we obtain 
\begin{align} \label{sin((2m+1)theta)}
a_m-\varepsilon 2^m &= 2^m \left( \frac{\sin((2m+1)\theta)}{\sin \theta} -\varepsilon \right ) \notag \\
& =-2^m \sqrt{\frac{8}{7}} \left( \sin((2m+1)\theta) -\sin (\varepsilon \theta) \right),
\end{align}
since $\sin \theta=-\sqrt{\frac{7}{8}}$.

If $\varepsilon=1$ we see that
\begin{align*}
 | \sin((2m+1)\theta) -\sin \theta  | &= 2| \sin (m\theta) \cos ((m+1)\theta)| \\
 & =2 \left| \sin \left( m\theta -k_0\frac{\pi}{2} \right) \sin \left( (m+1)\theta -k_1\frac{\pi}{2} \right)\right|,
\end{align*} for all even integers $k_0$ and all odd integers $k_1$. We choose $k_0$ to be the even number than minimizes $ \left| m\theta -k_0 \frac{\pi}{2} \right|$, and $k_1$ the odd number that minimizes $ \left| (m+1)\theta -k_1 \frac{\pi}{2} \right|$. In this case, both of these quantities\footnote{We note that neither of these quantities can be zero. Indeed, if $\theta$ were a rational multiple of $\pi$, then both $e^{i\theta}$ and $e^{-i\theta}$ would be algebraic integers. So $2\cos \theta=1/\sqrt{2}$ would also be an algebraic integer, which is a contradiction.} are in the interval $\left( 0, \frac{\pi}{2} \right)$, so we can use the elementary inequality $|\sin(x)| \geq \frac{2}{\pi} |x|$, valid for all $|x|\leq \frac{\pi}{2}$. This gives 
\begin{align} \label{m_and_m_1_log}
 | \sin((2m+1)\theta) -\sin \theta  | & \geq 2 \left( \frac{2}{\pi} \right)^2 \left| m\theta -k_0 \frac{\pi}{2} \right| \left| (m+1)\theta -k_1 \frac{\pi}{2} \right| \notag \\ 
 &=\frac{2}{\pi^2} \left| 2m\theta - k_0 \pi \right|  \left| 2(m+1)\theta -k_1 \pi \right| \notag \\
 &=\frac{2}{\pi^2} \left| m \log e^{2i\theta} - k_0 i \pi \right| \left|(m+1) \log e^{2i\theta} - k_1 i \pi \right|. 
\end{align}
At this point we can apply again the result of Laurent, Mignotte and Nesterenko. In fact, since $e^{2i\theta}=\omega/\overline{\omega}$, we have as in Proposition~\ref{compare_a_n} that $10.98  |\log e^{2i\theta} |+ h(e^{2i\theta})>20.$ Then by Lemma~\ref{sharp_two_log} we obtain 
\begin{align*}
\left| m \log e^{2i\theta} - k_0 i \pi \right|  & \geq \exp(-8.87\cdot 26.9056\cdot H_1^2) \\
& > \exp(-2^8 \cdot H_1^2),
\end{align*}
where $$H_1=\max \left\{ 17, \log \left( \frac{k_0}{53.81} + \frac{m}{68.9} \right) +7.38 \right\}.$$ By construction, $k_0\leq m\theta/(\pi/2)+1< m\cdot 0.76995 +1$, and thus
$$ \log \left( \frac{k_0}{53.81} + \frac{m}{68.9} \right) +7.38 < \log (0.05\cdot m) + 7.38<\log(m)+4.385.$$
Now assume that $m\geq 302000$, so that $H_1<\log(m)+4.385$. In this case
\begin{equation} \label{lower_bound_k_0}
\left| m \log e^{2i\theta} - k_0 i \pi \right|  >\exp \left(-2^8 ( \log (m)+4.385)^2 \right).
\end{equation}
It is clear that the right-hand side of (\ref{lower_bound_k_0}) also serves as a lower bound for the second absolute value that appears in equation (\ref{m_and_m_1_log}). Therefore
$$  | \sin((2m+1)\theta) -\sin \theta  | \geq \exp \left(-2^9 (
\log (m)+4.385)^2 \right).$$
Similarly, if $\varepsilon=-1$ we obtain that
$$  | \sin((2m+1)\theta) -\sin (-\theta)  | \geq \exp \left(-2^9 (
\log (m)+4.385)^2 \right).$$
By (\ref{sin((2m+1)theta)}) it follows that for $m\geq 302000$:
\begin{equation} \label{lower_bound_|a_m-2^m|}
\log( |a_m-\varepsilon 2^m|) \geq m\log(2)+\log(\sqrt{8/7}) -2^9 ( \log (m)+4.385)^2. 
\end{equation}
Combining (\ref{upper_bound_log|a_n-2^n|}) and (\ref{lower_bound_|a_m-2^m|}) leads to the following inequality 
$$\log(24m) \geq m\log(2)+\log(\sqrt{8/7}) -2^9 ( \log (m)+4.385)^2,$$ which certainly does not hold for $m\geq 302000$. 
To finish the proof, we run a quick computer check confirming that for $m<302000$ there are no solutions to \eqref{a_n-2^n} in the case $\varepsilon=1$. 

When $\varepsilon=-1$ we do obtain one solution, namely $$a_5+2^5=a_9+2^9=55.$$
However, this does not yield an equality of traces because of the congruence conditions from \eqref{a(n)}. Indeed, ${\rm Tr ~T_2}(12,\SL_2(\Z))=-a_5-1=-24$, whereas ${\rm Tr ~T_2}(20,\SL_2(\Z))=-a_9-1=456$.

\end{proof}

\section*{Acknowledgement} We are grateful to Jeffrey Ovall for helpful conversations. 

\bigskip

\bibliographystyle{alpha}

\bibliography{ref}

\end{document}